\newcommand{\N}{\mathcal{N}}
\newcommand{\C}{\mathbb{C}}
\newcommand{\g}{\mathfrak{g}}
\renewcommand{\v}{\mathfrak{v}}
\newcommand{\z}{\mathfrak{z}}
\newcommand{\n}{\mathfrak{n}}
\newcommand{\h}{\mathfrak{h}}
\renewcommand{\a}{\mathfrak{a}}
\newtheorem*{thmA}{Theorem A}
\newtheorem*{thmB}{Theorem B}
\newtheorem*{thmC}{Theorem C}
\newtheorem*{thmD}{Theorem D}
\newtheorem{theorem}{Theorem}[section]
\newtheorem{lemma}[theorem]{Lemma}
\theoremstyle{definition}
\theoremstyle{remark}
\newtheorem{remark}[theorem]{Remark}
\numberwithin{equation}{section}
\begin{document}

\title[]{Rigid 2-step graph Lie algebras}
\author{Josefina Barrionuevo and Paulo Tirao}
\address{CIEM-FaMAF, Universidad Nacional de Córdoba, Argentina}
\date{February, 2022}
\subjclass[2010]{Primary 17B30; Secondary 17B99}
\keywords{Nilpotent Lie algebras, deformations, rigidity.}

\begin{abstract}
 We characterize those graphs which correspond to a rigid 2-step nilpotent
 Lie algebra in the variety of at most 2-step nilpotent Lie algebras.
\end{abstract}

\maketitle

In this work we assume that the field is $\C$ and all Lie brackets are complex Lie brackets. However some results are valid for fields of caracteristic $0$ or infinite fields. 

Understanding which Lie algebras are rigid within different varieties of Lie algebras is a deep and difficult problem.
Already finding some rigid Lie algebras is quite challenging.
On the other hand showing that large families are not rigid sheds some light  on the search for the rigid ones.

Let $\N_{n,2}$ be the variety of complex nilpotent Lie algebras of dimension $n$ which are at most 2-step nilpotent, 
that is either abelian or 2-step nilpotent.
On the one hand there are several cohomological characterizations of rigidity in $\N_{n,2}$ or 2-rigidity \cite{A, BT, GR}.
On the other hand the family of \emph{2-step graph Lie algebras}
have attracted particular attention in different frameworks, 
due in part to its strong combinatorial flavor.

The free 2-step nilpotent Lie algebras correspond to the complete graph.
Recently we proved that all free $k$-step nilpotent Lie algebras are rigid in the variety $\N_{n,k}$, of complex nilpotent Lie algebras at most $k$-step nilpotent, and not longer rigid in $\N_{n,k+1}$ \cite{BT}. 
That is they are $k$-rigid but not $(k+1)$-rigid. 

Notice that each isolated vertex of the underlying graph corresponds to an abelian 1-dimensional factor of the graph
Lie algbra. These graph algebras are, in general, not $2$-rigid \cite{BT}.

In this short note we prove that besides the free 2-step nilpotent Lie 
algebras there are only few more 2-rigid graph Lie algebras all of which correspond to graphs with at most 4 vertices and are 
of dimension less than or equal to 8.

\section{2-rigidity}

We say that a Lie bracket $\mu\in\N_{n,2}$ is $2$-rigid if $\mu$ 
is rigid in the variety $\N_{n,2}$, that is its $GL_n(\C)$-orbit
is Zarisky open in $\N_{n,2}$.

For the purpose of this paper, to prove rigidity we rely
on \cite{BT} and its Appendix.
The results that we will use, stated for $\N_{n,2}$, are the following:

\begin{thmA}[Sulca - Corollary 7.4 in \cite{BT}]\label{thm:A}
 Let $\mu\in\N_{n,2}$. If $H^2_{2\textit{-nil}}(\mu,\mu)=0$, then $\mu$ is $2$-rigid. 
\end{thmA}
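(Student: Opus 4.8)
The plan is to adapt the classical Nijenhuis--Richardson rigidity argument to the affine variety $\N_{n,2}$, realized inside the space $\Lambda^2(\C^n)^*\otimes\C^n$ of alternating bilinear maps $\C^n\times\C^n\to\C^n$. First I would describe $\N_{n,2}$ as the algebraic set cut out by the quadratic equations $\mu(\mu(x,y),z)=0$ for all $x,y,z$; this single family of equations already forces the Jacobi identity, since each of its cyclic summands vanishes, so no further relations are needed to define the variety. The group $G=\GL_n(\C)$ acts on this space by change of basis, and since $G$ preserves the class of at most $2$-step nilpotent brackets, each orbit $G\mp\mu$ is a locally closed, smooth, irreducible subvariety of $\N_{n,2}$. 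Rigidity of $\mu$ is by definition the assertion that $G\mp\mu$ is Zariski open, so the whole argument reduces to a comparison of dimensions at the point $\mu$.

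The core of the proof is a tangent-space computation carried out in two parts. On the one hand, differentiating the defining equations along a curve $\mu+t\varphi$ shows that the Zariski tangent space $T_\mu\N_{n,2}$ consists exactly of those $\varphi$ satisfying $\varphi(\mu(x,y),z)+\mu(\varphi(x,y),z)=0$; this is precisely the space of restricted $2$-cocycles $Z^2_{2\textit{-nil}}(\mu,\mu)$. On the other hand, differentiating the orbit map gives the tangent space to $G\mp\mu$ as the image of the infinitesimal action $A\mapsto A\mu(\cdot,\cdot)-\mu(A\cdot,\cdot)-\mu(\cdot,A\cdot)$ of $\mathfrak{gl}_n(\C)$, which is by definition the space of coboundaries $B^2_{2\textit{-nil}}(\mu,\mu)$. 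Because orbits stay inside $\N_{n,2}$, these coboundaries automatically satisfy the cocycle condition, so one always has $B^2_{2\textit{-nil}}\subseteq Z^2_{2\textit{-nil}}$.

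With both tangent spaces identified, the hypothesis $H^2_{2\textit{-nil}}(\mu,\mu)=Z^2_{2\textit{-nil}}/B^2_{2\textit{-nil}}=0$ forces $T_\mu(G\mp\mu)=T_\mu\N_{n,2}$. I would then close the argument by a dimension count: since $G\mp\mu$ is smooth, $\dim(G\mp\mu)=\dim T_\mu(G\mp\mu)=\dim T_\mu\N_{n,2}\ge\dim_\mu\N_{n,2}$, while the reverse inequality $\dim(G\mp\mu)\le\dim_\mu\N_{n,2}$ holds because the orbit is contained in the variety. Hence equality holds throughout, $\N_{n,2}$ is smooth at $\mu$ of dimension equal to that of the orbit, and the locally closed irreducible orbit must fill up a Zariski open neighbourhood of $\mu$.

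The step I expect to be most delicate is the precise identification of the reduced Zariski tangent space $T_\mu\N_{n,2}$ with $Z^2_{2\textit{-nil}}(\mu,\mu)$: one must check that the linearization of the defining quadratic equations yields no conditions beyond the stated cocycle identity, and that the scheme-theoretic tangent space does not exceed the naive one in a way that would break the dimension comparison. This is exactly the content handled in the Appendix of \cite{BT}, where the restricted complex computing $H^\bullet_{2\textit{-nil}}$ is set up so that its tangent-level data matches the variety $\N_{n,2}$ rather than the full variety of Lie algebras; invoking that construction lets me pass cleanly from the cohomological hypothesis to openness of the orbit.
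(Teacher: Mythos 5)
The paper itself contains no proof of Theorem A; it is imported verbatim from the appendix of \cite{BT} (Corollary 7.4), whose argument is exactly the one you outline: realize $\N_{n,2}$ inside $\Lambda^2(\C^n)^*\otimes\C^n$ as the zero set of the single quadratic condition $\mu(\mu(x,y),z)=0$ (which indeed subsumes Jacobi), identify the linearization of these equations with $Z^2_{2\textit{-nil}}(\mu,\mu)$ and the tangent space of the $\GL_n(\C)$-orbit with the coboundaries, and conclude openness of the orbit by the Nijenhuis--Richardson dimension comparison. Your proposal is correct and follows essentially this same route; the one step you compress --- that equality of the tangent-space and local dimensions makes $\mu$ a smooth point of the reduced variety, hence lying on a unique irreducible component, so that openness of the orbit in its closure propagates by homogeneity to openness in $\N_{n,2}$ --- is standard and does not create a gap.
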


\begin{thmB}[Theorem 4.3 in \cite{BT}]\label{thm:B}
 The free 2-step nilpotent Lie algebra on $m$ generators is 2-rigid.
\end{thmB}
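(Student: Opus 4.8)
The plan is to establish Theorem B directly, by showing that the $\GL_n(\C)$-orbit of the free 2-step algebra $\f_{m,2}=V\oplus\Lambda^2 V$ (with $V=\C^m$ and $n=m+\binom{m}{2}$) is Zariski open in $\N_{n,2}$, which is precisely 2-rigidity; this route is self-contained, although one can alternatively invoke Theorem A and reduce to the vanishing $H^2_{2\textit{-nil}}(\mu,\mu)=0$. For $\mu\in\N_{n,2}$ write $\g_\mu$ for the corresponding algebra, $r(\mu)=\dim[\g_\mu,\g_\mu]=\dim\Im\mu$ for the rank of the bracket viewed as a linear map $\Lambda^2\C^n\to\C^n$, and $d(\mu)=n-r(\mu)=\dim\g_\mu/[\g_\mu,\g_\mu]$. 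The goal is to prove that the orbit of $\mu_0=\f_{m,2}$ coincides with the set $U=\{\mu\in\N_{n,2}:r(\mu)=\binom{m}{2}\}$ and that $U$ is open.

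First I would record the two bounds that pin down $r$ near $\mu_0$. On one hand $\mu\mapsto r(\mu)$ is lower semicontinuous, since $\{r\ge k\}$ is cut out by non-vanishing of the $k\times k$ minors of $\mu$; hence $r(\mu)\ge r(\mu_0)=\binom{m}{2}$ on a Zariski-open neighbourhood of $\mu_0$, giving $d(\mu)\le m$ there. On the other hand, because $\g_\mu$ is at most 2-step nilpotent, $[\g_\mu,\g_\mu]$ is central and the bracket descends to a surjection $\Lambda^2(\g_\mu/[\g_\mu,\g_\mu])\twoheadrightarrow[\g_\mu,\g_\mu]$, yielding the universal upper bound $r(\mu)\le\binom{d(\mu)}{2}\le\binom{m}{2}$, the last step using $d(\mu)\le m$. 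Sandwiching these forces $r(\mu)=\binom{m}{2}$ and $d(\mu)=m$, so the descended bracket is an isomorphism $\Lambda^2(\g_\mu/[\g_\mu,\g_\mu])\xrightarrow{\sim}[\g_\mu,\g_\mu]$. A 2-step algebra generated by $m$ elements whose bracket is injective on $\Lambda^2$ of its abelianization satisfies only the defining relations of the free object, so $\g_\mu\cong\f_{m,2}$; thus $U$ equals the orbit and is open, proving 2-rigidity.

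The step I expect to require the most care is the interplay of the two rank bounds: lower semicontinuity must be taken in the Zariski topology inside the subvariety $\N_{n,2}$, and the bound $r\le\binom{d}{2}$ must be combined with the a priori inequality $d(\mu)\le m$ to close the sandwich; together these are what localize every nearby algebra onto the free one. If instead one pursues the cohomological route via Theorem A, the work shifts to computing $H^2_{2\textit{-nil}}(\mu,\mu)$ for $\mu=\f_{m,2}$: there I would exploit that the whole deformation complex is $\GL(V)$-equivariant and graded by $\g=V\oplus\Lambda^2 V$, decompose the $2$-cochains $\Hom(\Lambda^2\g,\g)$ into homogeneous weight components under the grading torus, and show degree by degree, via the Schur-functor decompositions of the resulting $\GL_m$-modules, that every $2$-nilpotent cocycle is a coboundary. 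In that approach the genuine obstacle is the degree-$0$ component, where cocycles correspond to grading-preserving deformations; controlling these is exactly where freeness (maximality of $\dim[\g,\g]$) must re-enter, and it is cleanly handled by the geometric argument above.
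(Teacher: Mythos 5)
Your argument is correct, and it is worth noting at the outset that the paper itself does not prove Theorem B: it imports it as Theorem 4.3 of \cite{BT}, so there is no internal proof to match, and your proposal supplies a genuine, self-contained one. The orbit-openness route you take is sound at every step: lower semicontinuity of $r(\mu)=\dim\Im\mu$ via non-vanishing of minors is valid in the relative Zariski topology of $\N_{n,2}$; the bound $r(\mu)\le\binom{d(\mu)}{2}$ is legitimate precisely because $\mu\in\N_{n,2}$ makes $[\g_\mu,\g_\mu]$ central, so the bracket descends to a well-defined surjection $\Lambda^2(\g_\mu/[\g_\mu,\g_\mu])\twoheadrightarrow[\g_\mu,\g_\mu]$; and with $n=m+\binom{m}{2}$ the sandwich $\binom{m}{2}\le r(\mu)\le\binom{n-r(\mu)}{2}$ forces $r(\mu)=\binom{m}{2}$ and $d(\mu)=m$. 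One small point deserves to be made explicit: a rank-equality locus is in general only locally closed, but here $r(\mu)\le\binom{m}{2}$ holds on \emph{all} of $\N_{n,2}$ (if $r>\binom{m}{2}$ then $d<m$ and $r\le\binom{d}{2}<\binom{m}{2}$, a contradiction), so your set $U$ coincides with the open locus $\{r\ge\binom{m}{2}\}$. The final identification $\g_\mu\cong\f_{m,2}$ is correctly a dimension count: $d(\mu)=m$ plus nilpotency yields $m$ generators, hence a surjection $\f_{m,2}\twoheadrightarrow\g_\mu$ between algebras of equal dimension $n$, and the orbit equals $U$, giving 2-rigidity exactly as the paper defines it. This is in essence the genericity argument (upper semicontinuity of the minimal number of generators) that underlies the proof in \cite{BT} for free $k$-step algebras, specialized to $k=2$; what your version buys is complete independence from the cohomological machinery of Theorem A, whereas your alternative sketch via $H^2_{2\textit{-nil}}(\mu,\mu)=0$ is only a sketch --- you correctly flag the degree-zero component as the delicate part --- and should not be leaned on; the geometric argument stands on its own.
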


\begin{thmC}[Theorem 6.10 in \cite{BT}]\label{thm:C}
 If $\a_l$ is the $l$-dimensional abelian bracket, with $l\ge 1$, then 
 $\n\oplus\a_l\in\N_{n,2}$ is 2-rigid if and only if $\n=\h_1$ and $l=1$.
\end{thmC}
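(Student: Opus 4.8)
The plan is to use the standard characterization that $\mu$ is $2$-rigid exactly when its $\GL_n(\C)$-orbit $\O(\mu)$ is open in $\N_{n,2}$, and to treat the two implications by opposite techniques. For the rigidity of $\h_1\oplus\a_1$ I would realize its orbit as the complement of a closed set, while for the failure of rigidity in every other case I would write down an explicit one-parameter family $\mu_t\in\N_{n,2}$ with $\mu_0=\n\oplus\a_l$ and $\mu_t\not\cong\mu_0$ for $t\neq0$. Since a degeneration $\mu_0\in\overline{\O(\mu_t)}\setminus\O(\mu_t)$ forces $\O(\mu_0)$ not to be open, each such family certifies non-rigidity, and I expect the non-isomorphism to always be detected by an elementary invariant, namely $\dim Z(\g)$ or $\dim[\g,\g]$.

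For the ``if'' direction I would first classify the non-abelian members of $\N_{4,2}$. A dimension count on the alternating form induced by the bracket shows $\dim[\g,\g]=2$ is impossible in dimension $4$ (it would need $\Lambda^2(\g/[\g,\g])\to[\g,\g]$ surjective with $\dim\Lambda^2(\g/[\g,\g])=1$), so $\dim[\g,\g]=1$, and analyzing the radical gives $\g\cong\h_1\oplus\a_1$ uniquely. Hence $\N_{4,2}=\O(\h_1\oplus\a_1)\sqcup\{0\}$; the abelian point $\{0\}$ is closed, so $\O(\h_1\oplus\a_1)$ is open, which is the rigidity. Alternatively one may verify $H^2_{2\textit{-nil}}(\h_1\oplus\a_1,\h_1\oplus\a_1)=0$ and invoke Theorem A.

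For the ``only if'' direction I would organize the argument by the sizes of the abelian factor and of $Z(\n)$. If $l\ge2$, choose $x\in\n\setminus Z(\n)$ and a basis $w_1,\dots,w_l$ of $\a_l$, and set $[x,w_1]_t=t\,w_2$ with all other brackets unchanged; since $w_2$ stays central the family remains $2$-step, while $w_1$ leaves the centre, so $\dim Z(\mu_t)=\dim Z(\mu_0)-1$ and $\mu_t\not\cong\mu_0$. If $l=1$ and $\dim Z(\n)\ge2$, write $V=\n/Z(\n)$ and deform by $[\,\cdot\,,w]_t=t\,\beta$ for a homomorphism $\beta\in\Hom(V,Z(\n))$; because $\Im(\ad)\subseteq\Hom(V,Z(\n))$ has dimension $\dim V<\dim V\cdot\dim Z(\n)$, I can pick $\beta\notin\Im(\ad)$, and then no element outside $Z(\n)$ is central in $\mu_t$, so once more the centre drops and $\mu_t\not\cong\mu_0$.

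The remaining case is $l=1$ with $\dim Z(\n)=1$: here $[\n,\n]=Z(\n)$ is one-dimensional, forcing $\n\cong\h_k$, and $\n\neq\h_1$ means $k\ge2$. Writing $\h_k=\langle x_1,y_1,\dots,x_k,y_k,z\rangle$ with $[x_i,y_i]=z$ and $\a_1=\langle w\rangle$, I would deform by $[x_1,y_2]_t=t\,w$; this keeps $z,w$ central so the family lies in $\N_{n,2}$, but now $[\mu_t,\mu_t]=\langle z,w\rangle$ is two-dimensional whereas $[\mu_0,\mu_0]=\langle z\rangle$, giving $\mu_t\not\cong\mu_0$. The main obstacle I anticipate is the middle case: one must verify that the non-inner $\beta$ genuinely shrinks the centre rather than merely relocating it, i.e. that $x+cw$ central with $c\neq0$ would force $\beta=\tfrac1{ct}\ad_x$, contradicting $\beta\notin\Im(\ad)$. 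Once this is checked the four cases are exhaustive, and together they show $\n\oplus\a_l$ is $2$-rigid only when $\n=\h_1$ and $l=1$.
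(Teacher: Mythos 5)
Your proposal is correct, but there is nothing in this paper to compare it with line by line: the paper gives no proof of Theorem C, importing it as a black box from Theorem 6.10 of \cite{BT}. What you have written is therefore a genuinely different, self-contained route. Your ``if'' direction is sound: the $\Lambda^2$ dimension count rules out $\dim[\g,\g]=2$ in dimension $4$, the induced form on $\g/Z(\g)$ pins down $\h_1\oplus\a_1$ as the unique non-abelian member of $\N_{4,2}$, and the orbit is then the complement of the single closed point given by the zero bracket, hence open. Your ``only if'' direction replaces cohomology by explicit degenerations detected by $\dim Z(\g)$ and $\dim[\g,\g]$, and it is instructive to see how it sits relative to the paper's own toolkit: your third family $[x_1,y_2]_t=t\,w$ on $\h_k\oplus\a_1$ is literally an instance of the paper's Theorem D (take Theorem D's pair to be $(x_1,y_2)$ and its central element to be your $w$, since $\langle[x_1,\g]\cup[y_2,\g]\rangle=\langle z\rangle\subsetneq\z=\langle z,w\rangle$), but your first two families are not: there the new bracket pairs a vector of $\n$ against a \emph{central} vector of the abelian factor, a configuration excluded by Theorem D's hypothesis $v,w\in\v$ --- which is exactly why the paper must invoke Theorem C separately for graphs with isolated vertices. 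In effect you prove directly the part of \cite[Theorem 6.10]{BT} that Theorem D cannot reach, with the non-inner $\beta$ argument (your dimension count $\dim V<\dim V\cdot\dim Z(\n)$ and the check that $x+cw$ central with $c\neq0$ would force $\beta=\tfrac{1}{ct}\ad_x$) correctly handling the delicate middle case.

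Two small points you should make explicit. First, the statement and your proof tacitly assume $\n$ non-abelian: you choose $x\in\n\setminus Z(\n)$ and use $V=\n/Z(\n)\neq0$, and indeed the literal statement would fail otherwise, since $\a_1\oplus\a_1=\a_2$ is vacuously $2$-rigid in $\N_{2,2}$. Second, your principle that $\mu_0\in\overline{\O(\mu_t)}\setminus\O(\mu_t)$ forces $\O(\mu_0)$ not to be open needs either the observation that all $\mu_t$ with $t\neq0$ lie in a single orbit (true here: the diagonal map rescaling $w_2$, respectively $w$, carries $\mu_1$ to $\mu_t$), or the standard curve argument: if $\O(\mu_0)$ were open, its preimage under the polynomial map $t\mapsto\mu_t$ would be a nonempty Zariski-open, hence cofinite, subset of $\C$ containing $0$, producing some $t\neq0$ with $\mu_t\cong\mu_0$ and contradicting your center and derived-algebra computations. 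With these two remarks added (and the routine Jacobi and $2$-step verifications, which do hold since every new bracket lands in the center), your four cases are exhaustive and the proof is complete.
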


To prove non-rigidity within the class of 2-step graph Lie algebras, 
Theorem D below suffices. 
To prove it we use some preliminary result, that can be found in
\cite{A}.

Recall that if $\mu\in\N_{n,2}$, we can descompose $\C^n=\v\oplus\z$ 
with $\z$ the center of $\mu$ and $\v$ any direct complement of it.

\begin{lemma}[Alvarez]
Let $\mu\in\N_{n,2}$, with $\C^n=\v\oplus\z$. If $\mu$ is $2$-rigid, then $\Lambda^2\v^*\otimes\z\subseteq B^2(\mu,\mu)$. 
\end{lemma}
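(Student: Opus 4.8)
The plan is to realize $\Lambda^2\v^*\otimes\z$ as a \emph{linear} subspace of the space of all brackets that lies entirely inside $\N_{n,2}$ and passes through $\mu$, and then to extract the inclusion into $B^2(\mu,\mu)$ from the collapse of the Zariski tangent space forced by rigidity. I view $\Lambda^2\v^*\otimes\z$ inside $C^2(\mu,\mu)=\Lambda^2(\C^n)^*\otimes\C^n$ by extending a skew form $\psi\colon\v\times\v\to\z$ by zero on $\C^n\times\C^n$, using $\C^n=\v\oplus\z$. The first step, which is essentially free, is that every such $\psi$ is itself a bracket in $\N_{n,2}$: since $\Im\psi\subseteq\z$ and $\psi$ vanishes whenever one entry lies in $\z$, one has $\psi(\psi(x,y),z)=0$ for all $x,y,z$, so the Jacobi identity holds trivially and $\psi$ is at most $2$-step nilpotent. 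As $\z$ is the center of $\mu$ and $\mu$ is $2$-step nilpotent, $\mu$ itself lies in this subspace. Writing $W:=\Lambda^2\v^*\otimes\z$, we thus obtain a linear subspace with $\mu\in W\subseteq\N_{n,2}$.

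Next I would observe that, being a linear subspace of $C^2(\mu,\mu)$ contained in the variety and containing $\mu$, $W$ sits inside the Zariski tangent space $T_\mu\N_{n,2}$. Indeed, for $\psi\in W$ the whole line $t\mapsto\mu+t\psi$ remains in $W\subseteq\N_{n,2}$, so every polynomial vanishing on $\N_{n,2}$ vanishes along it, and hence its differential at $\mu$ annihilates $\psi$; this is precisely the statement $\psi\in T_\mu\N_{n,2}$.

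On the other side, the orbit map $g\mapsto g\cdot\mu$ is smooth onto its image, and the differential of the $\GL_n(\C)$-action at the identity sends $f\in C^1(\mu,\mu)$ to the coboundary of $f$ (up to sign); consequently the tangent space to the orbit at $\mu$ is exactly $B^2(\mu,\mu)$. Now $2$-rigidity means that $\GL_n(\C)\cdot\mu$ is Zariski open in $\N_{n,2}$, so near $\mu$ the variety coincides with the smooth orbit and the two Zariski tangent spaces agree, $T_\mu\N_{n,2}=B^2(\mu,\mu)$. Combining this with $W\subseteq T_\mu\N_{n,2}$ gives $\Lambda^2\v^*\otimes\z\subseteq B^2(\mu,\mu)$, as desired.

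The delicate point is the tangent-space identity $T_\mu\N_{n,2}=B^2(\mu,\mu)$. The inclusion $B^2(\mu,\mu)=T_\mu(\GL_n(\C)\cdot\mu)\subseteq T_\mu\N_{n,2}$ comes for free from the orbit lying in the variety; it is the reverse inclusion that carries all the weight, since $\N_{n,2}$ may be singular (or non-reduced) at $\mu$, and the identification of local rings at $\mu$ must be handled with care. Openness of the orbit supplies a Zariski-open $U\ni\mu$ with $\GL_n(\C)\cdot\mu=\N_{n,2}\cap U$, and localizing at $\mu$ identifies the local rings and hence the tangent spaces. This is exactly the preliminary fact of Alvarez to which the statement is attributed, so I would cite \cite{A} for it rather than reprove the scheme-theoretic bookkeeping.
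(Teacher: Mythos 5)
Your proof is correct, but it takes a genuinely different route from the paper's. Both arguments begin with the same key observation --- for $\varphi\in\Lambda^2\v^*\otimes\z$, extended by zero, the line $\mu+t\varphi$ stays in $\N_{n,2}$, since such a cochain has image in $\z$ and kills $\z$, making Jacobi and $2$-step nilpotency automatic (your verification that $\mu$ itself lies in $W=\Lambda^2\v^*\otimes\z$ is also correct, as $[\g,\g]\subseteq\z$ for a $2$-step bracket) --- but they extract $\varphi\in B^2(\mu,\mu)$ by different mechanisms. The paper treats $\mu_t=\mu+t\varphi$ as an analytic one-parameter deformation inside $\N_{n,2}$ and invokes the equivalence, in characteristic zero, of geometric and analytic rigidity \cite[Theorem 7.1]{M}: $2$-rigidity forces $\mu_t$ to be equivalent to the trivial deformation, and triviality forces the linear term $\varphi$ to be a coboundary. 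You instead argue with Zariski tangent spaces: $W$ is a linear subspace of $\N_{n,2}$ through $\mu$, hence $W\subseteq T_\mu\N_{n,2}$ (computed, as you correctly set it up, from the vanishing ideal, i.e.\ the reduced structure, so the non-reducedness worry is moot), while openness of the orbit identifies local rings and yields $T_\mu\N_{n,2}=T_\mu\bigl(\GL_n(\C)\cdot\mu\bigr)=B^2(\mu,\mu)$, the last equality being the standard identification of the orbit tangent space with coboundaries, valid over $\C$ where orbit maps are separable. What each approach buys: the paper's version is short because all the algebro-geometric content is packaged into Markl's quoted theorem, whereas yours makes that content explicit and bypasses formal/analytic deformation theory entirely; your ``delicate point'' $T_\mu\N_{n,2}=B^2(\mu,\mu)$ at a point with open orbit plays precisely the role that the analytic--geometric rigidity equivalence plays in the paper, and deferring it to \cite{A} or to your local-ring sketch is legitimate. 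Both proofs are, as written, confined to characteristic zero.
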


\begin{proof}
 If $\varphi \in \Lambda^2\v^*\otimes\z$, then 
 $\mu_t=\mu+t\varphi$ is a $2$-step analytic deformation and then is equivalent to the trivial deformation of $\mu$ \cite[Theorem 7.1]{M}.
 [In characteristic zero, the analytic and geometric rigidity are
 equivalent concepts.] This implies that
 $\varphi\in B^2(\mu,\mu)$. 
\end{proof}

\begin{remark} In \cite{A} there is also the proof of the converse of the previous lemma (that is not necessary in this paper).
\end{remark}

\begin{thmD}\label{thm:D}
Let $\mu\in\N_{n,2}$, with $\C^n=\v\oplus\z$. If there exist $v,w\in\v$ linearly independent such that, $[v,w]=0$ and $\langle [v,\g]\cup [w,\g]\rangle \subsetneq \z$, then $\mu$ is not $2$-rigid.
\end{thmD}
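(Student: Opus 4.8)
The plan is to use Alvarez's Lemma in its contrapositive form: to prove that $\mu$ is not $2$-rigid it suffices to produce a single cochain $\varphi\in\Lambda^2\v^*\otimes\z$ that fails to be a coboundary, since then $\Lambda^2\v^*\otimes\z\not\subseteq B^2(\mu,\mu)$ and $2$-rigidity is impossible. Write $W=\langle [v,\g]\cup[w,\g]\rangle$. Because $\mu$ is $2$-step nilpotent every bracket is central, so $W\subseteq\z$ automatically, and the hypothesis sharpens this to $W\subsetneq\z$. This proper containment is exactly what provides a "free direction" in the center along which I can place the value of $\varphi$.

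First I would construct the candidate cocycle. Since $v,w$ are linearly independent, extend them to a basis of $\v$; choose $z_0\in\z\setminus W$ (possible precisely because $W\subsetneq\z$); and define $\varphi\in\Lambda^2\v^*\otimes\z$ by $\varphi(v,w)=z_0$, with $\varphi$ vanishing on all other pairs of basis vectors of $\v$ and vanishing whenever one of its arguments lies in $\z$. The linear independence of $v$ and $w$ is used precisely here, to guarantee that such a nonzero alternating form is well defined.

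Next I would assume for contradiction that $\varphi=\delta f$ for some $f\in\Hom(\g,\g)$ and evaluate at the pair $(v,w)$. Using the Chevalley--Eilenberg formula $\delta f(v,w)=[f(v),w]+[v,f(w)]-f([v,w])$ together with the hypothesis $[v,w]=0$, this collapses to $z_0=\varphi(v,w)=[f(v),w]+[v,f(w)]$. But $[f(v),w]\in\langle [w,\g]\rangle$ and $[v,f(w)]\in\langle [v,\g]\rangle$, so the right-hand side lies in $W$, while $z_0\notin W$ by construction — a contradiction. Hence $\varphi\notin B^2(\mu,\mu)$, and Alvarez's Lemma yields that $\mu$ is not $2$-rigid.

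I do not expect a serious obstacle here: once the cochain is chosen the argument is essentially a one-line cohomological computation, and the vanishing of $[v,w]$ is what kills the only term of $\delta f(v,w)$ that could have escaped $W$. The sole point requiring care is bookkeeping — verifying that $\varphi$ genuinely defines an element of $\Lambda^2\v^*\otimes\z$ (where linear independence enters) and that both surviving terms of $\delta f(v,w)$ are forced into the span $W$ of brackets through $v$ and $w$.
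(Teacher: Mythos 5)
Your proposal is correct and follows essentially the same route as the paper's own proof: you choose $z_0\in\z\setminus\langle [v,\g]\cup[w,\g]\rangle$, take $\varphi=v^*\wedge w^*\otimes z_0$, evaluate $\delta^1 f(v,w)=[f(v),w]+[v,f(w)]-f([v,w])$ using $[v,w]=0$ to force $z_0\in\langle [v,\g]\cup[w,\g]\rangle$, and conclude via the contrapositive of Alvarez's Lemma. Your explicit bookkeeping (extending $v,w$ to a basis to define $\varphi$) is a minor elaboration of the paper's shorthand $v^*\wedge w^*\otimes z$, not a different argument.
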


\begin{proof}
Choose $z\in \z-\langle [v,\g]\cup [w,\g]\rangle$ and consider
$\varphi=v^*\wedge w^*\otimes z$.
On the one hand
$\varphi\in \Lambda^2\v\otimes \z$. 
On the other hand $\varphi\notin B^2(\mu,\mu)$.
In fact, if $\varphi=\delta^1(f)$ for a linear map $f:\g\rightarrow\g$,
then
\begin{eqnarray*}
(v^*\wedge w^*\otimes z)(v,w) &=& \delta^1f(v,w) \\
z &=& [f(v),w]+[v,f(w)]-f([v,w]) \\
z &=& [f(v),w]+[v,f(w)]
\end{eqnarray*}
and hence $z\in \langle [v,\g]\cup [w,\g]\rangle$, wich is an absurd since we started this proof choosing $z\in \z-\langle [v,\g]\cup [w,\g]\rangle$. 

Therefore $\Lambda^2\v\otimes\z\not\subseteq B^2(\mu,\mu)$
and since the previous lemma $\mu$ is not $2$-rigid.  
\end{proof}

\section{Graph Lie algebras}

Let $G=(V,A)$ be a graph with vertices $V=\{v_1,\dots,v_m\}$
and edges $A\subseteq \{a_{ij}:\, 1\le i<j\le m\}$. 

The graph Lie algebra associated to $G$, $\g_G$, is the 
$\C$-vector space generated by $V\cup A$ and Lie bracket defined by
\[
[v_i,v_j]= \begin{cases} 
a_{ij}, &\text{\ if\ } a_{ij}\in A, \\
0, &\text{otherwise}, 
\end{cases}
\]
for $1\le i<j\le m$.
If $A=\emptyset$, then $\g_G$ is the $m$-dimensional abelian Lie algebra, but in general if $A\neq \emptyset$, $\g_G$ is a $2$-step nilpotent Lie algebra. 

Each connected component of $G$ corresponds to an ideal of $\g_G$ 
and $\g_G$ is the direct sum of these ideals.
Isolated vertices are 1-dimensional ideals and the sum
of all of them is a maximal abelian factor of $\g_G$.

We are interested on their rigidity or non rigidity 
inside $\N_{n,2}$.

\section{2-rigidity of graph Lie algebras}

Before going to the general case, let us consider the cases
$m\le 4$.
The corresponding algebras are of small dimension, less than or equal to 10, and many have an abelian factor.
For each one of these there are more than one way to justify
its 2-rigidity or its non 2-rigidity.
To prove 2-rigidity we use the theorems A and B.
And for non 2-rigidy we use the theorems C and D.

\vskip1cm

\begin{remark}
To prove $2$-rigidity we distinguish two cases, $2$-step free (Theorem B) and others (Theorem A). 
To prove non $2$-rigidity we will distinguish two cases, with abelian factors (Theorem C) or without (Theorem D). In the last case, the descomposition will be $\C^n=\v\oplus\z=\langle V\rangle\oplus \langle A\rangle$.
\end{remark}

\begin{remark}\label{rmk:3-rigid}
	Every graph Lie algebra, but $\h_1$ (Heisenberg) and $\a_1$
	and $a_2$ (abelian of dimensions 1 and 2), can be deformed to a
	3-step nilpotent Lie algebra \cite [Section 3]{BT}. 
	So that graph Lie algebras, up to these 3 exceptions, 
	are never 3-rigid.
\end{remark}

\begin{center}
Graph Lie algebras with 2 or 3 vertices \\
\ \\
 \begin{tabular}{|c|c|c||c|c|c|}
 	\hline
 \raisebox{-0.5\height}{ 
  \begin{tikzpicture}[scale=.8]
  \clip (3,-2) rectangle (6,-1);
   \filldraw (3.5,-1.5) circle (2.5pt);
   \filldraw (5.5,-1.5) circle (2.5pt);
 \end{tikzpicture}}  & 2-rigid (*) & A &
  \raisebox{-0.5\height}{ 
  	\begin{tikzpicture}[scale=.8]
  	\clip (3,-2) rectangle (6,-1);
  	\filldraw (3.5,-1.5) circle (2.5pt);
  	\filldraw (5.5,-1.5) circle (2.5pt);
  	\draw [line width=1.1pt] (3.5,-1.5)--(5.5,-1.5);
  	\end{tikzpicture}}  & 2-rigid (*) & B \\
  \hline
   \raisebox{-0.5\height}{ 
  	\begin{tikzpicture}[scale=.8]
  	\clip (3,-3.5) rectangle (6,-1);
  	\filldraw (3.5,-3) circle (2.5pt);
  	\filldraw (4.5,-1.5) circle (2.5pt);
  	\filldraw (5.5,-3) circle (2.5pt);
  	\end{tikzpicture}}  & non $2$-rigid & D &
  \raisebox{-0.5\height}{ 
  	\begin{tikzpicture}[scale=.8]
  	\clip (3,-3.5) rectangle (6,-1);
  	\filldraw (3.5,-3) circle (2.5pt);
  	\filldraw (4.5,-1.5) circle (2.5pt);
  	\filldraw (5.5,-3) circle (2.5pt);
  \draw [line width=1.1pt] (3.5,-3)--(5.5,-3);
  	\end{tikzpicture}}  & $2$-rigid & A \\
   \hline
  \raisebox{-0.5\height}{ 
  \begin{tikzpicture}[scale=.8]
   \clip (3,-3.5) rectangle (6,-1);
   \filldraw (3.5,-3) circle (2.5pt);
   \filldraw (4.5,-1.5) circle (2.5pt);
   \filldraw (5.5,-3) circle (2.5pt);
   \draw [line width=1.1pt] (3.5,-3)--(4.5,-1.5);
   \draw [line width=1.1pt] (3.5,-3)--(5.5,-3);
 \end{tikzpicture}}  & $2$-rigid & A &
 \raisebox{-0.5\height}{ 
 	\begin{tikzpicture}[scale=.8]
 	\clip (3,-3.5) rectangle (6,-1);
 	\filldraw (3.5,-3) circle (2.5pt);
 	\filldraw (4.5,-1.5) circle (2.5pt);
 	\filldraw (5.5,-3) circle (2.5pt);
 	\draw [line width=1.1pt] (3.5,-3)--(4.5,-1.5);
 	\draw [line width=1.1pt] (4.5,-1.5)--(5.5,-3);
 	\draw [line width=1.1pt] (3.5,-3)--(5.5,-3);
 	\end{tikzpicture}}  & $2$-rigid & B \\
 \hline
\end{tabular}
\end{center}

\

Those marked with (*) are the only 3-rigid ones 
(see Remark \ref{rmk:3-rigid}).

\vskip1cm

\newpage

 \begin{center}
  Graph Lie algebras with 4 vertices \\
  \ \\
 \begin{tabular}{|c|c|c||c|c|c|}
 	\hline
 \raisebox{-0.5\height}{ 
 	\begin{tikzpicture}[scale=.8]
 	\clip (3,-3.5) rectangle (5.5,-1);
 	\filldraw (3.5,-1.5) circle (2.5pt);
 	\filldraw (5,-1.5) circle (2.5pt);
 	\filldraw (3.5,-3) circle (2.5pt);
 	\filldraw (5,-3) circle (2.5pt);
 	\end{tikzpicture}}  & non $2$-rigid & D &
\raisebox{-0.5\height}{ 
	\begin{tikzpicture}[scale=.8]
	\clip (3,-3.5) rectangle (5.5,-1);
	\filldraw (3.5,-1.5) circle (2.5pt);
	\filldraw (5,-1.5) circle (2.5pt);
	\filldraw (3.5,-3) circle (2.5pt);
	\filldraw (5,-3) circle (2.5pt);
	\draw [line width=1.1pt] (3.5,-3)--(5,-3);
	\end{tikzpicture}}  & non $2$-rigid & D \\
 \hline
\raisebox{-0.5\height}{ 
	\begin{tikzpicture}[scale=.8]
	\clip (3,-3.5) rectangle (5.5,-1);
	\filldraw (3.5,-1.5) circle (2.5pt);
	\filldraw (5,-1.5) circle (2.5pt);
	\filldraw (3.5,-3) circle (2.5pt);
	\filldraw (5,-3) circle (2.5pt);
	\draw [line width=1.1pt] (3.5,-3)--(5,-3);
    \draw [line width=1.1pt] (3.5,-3)--(3.5,-1.5);
	\end{tikzpicture}}  & non $2$-rigid & D &
 \raisebox{-0.5\height}{ 
 	\begin{tikzpicture}[scale=.8]
 	\clip (3,-3.5) rectangle (5.5,-1);
 	\filldraw (3.5,-1.5) circle (2.5pt);
 	\filldraw (5,-1.5) circle (2.5pt);
 	\filldraw (3.5,-3) circle (2.5pt);
 	\filldraw (5,-3) circle (2.5pt);
 	\draw [line width=1.1pt] (3.5,-3)--(5,-3);
 	\draw [line width=1.1pt] (3.5,-1.5)--(5,-1.5);
 	\end{tikzpicture}}  & $2$-rigid & A \\
 \hline
  \raisebox{-0.5\height}{ 
 	\begin{tikzpicture}[scale=.8]
 	\clip (3,-3.5) rectangle (5.5,-1);
 	\filldraw (3.5,-1.5) circle (2.5pt);
 	\filldraw (5,-1.5) circle (2.5pt);
 	\filldraw (3.5,-3) circle (2.5pt);
 	\filldraw (5,-3) circle (2.5pt);
 	\draw [line width=1.1pt] (3.5,-3)--(5,-3);
 	\draw [line width=1.1pt] (3.5,-3)--(3.5,-1.5);
 	\draw [line width=1.1pt] (3.5,-1.5)--(5,-1.5);
 	\end{tikzpicture}}  & non $2$-rigid & C &
 \raisebox{-0.5\height}{ 
 	\begin{tikzpicture}[scale=.8]
 	\clip (3,-3.5) rectangle (5.5,-1);
 	\filldraw (3.5,-1.5) circle (2.5pt);
 	\filldraw (5,-1.5) circle (2.5pt);
 	\filldraw (3.5,-3) circle (2.5pt);
 	\filldraw (5,-3) circle (2.5pt);
 	\draw [line width=1.1pt] (3.5,-3)--(5,-3);
 	\draw [line width=1.1pt] (3.5,-3)--(3.5,-1.5);
 	\draw [line width=1.1pt] (3.5,-3)--(5,-1.5);
 	\end{tikzpicture}}  & non $2$-rigid & C \\
\hline
 \raisebox{-0.5\height}{ 
 	\begin{tikzpicture}[scale=.8]
 	\clip (3,-3.5) rectangle (5.5,-1);
 	\filldraw (3.5,-1.5) circle (2.5pt);
 	\filldraw (5,-1.5) circle (2.5pt);
 	\filldraw (3.5,-3) circle (2.5pt);
 	\filldraw (5,-3) circle (2.5pt);
 	\draw [line width=1.1pt] (3.5,-3)--(5,-3);
 	\draw [line width=1.1pt] (3.5,-3)--(3.5,-1.5);
 	\draw [line width=1.1pt] (3.5,-1.5)--(5,-3);
 	\end{tikzpicture}}  & non $2$-rigid & C &
 \raisebox{-0.5\height}{ 
 	\begin{tikzpicture}[scale=.8]
 	\clip (3,-3.5) rectangle (5.5,-1);
 	\filldraw (3.5,-1.5) circle (2.5pt);
 	\filldraw (5,-1.5) circle (2.5pt);
 	\filldraw (3.5,-3) circle (2.5pt);
 	\filldraw (5,-3) circle (2.5pt);
 	\draw [line width=1.1pt] (3.5,-3)--(5,-3);
 	\draw [line width=1.1pt] (3.5,-3)--(3.5,-1.5);
 	\draw [line width=1.1pt] (3.5,-1.5)--(5,-3);
 	\draw [line width=1.1pt] (3.5,-3)--(5,-1.5);
 	\end{tikzpicture}}  & non $2$-rigid & C \\
 \hline
 \raisebox{-0.5\height}{ 
 	\begin{tikzpicture}[scale=.8]
 	\clip (3,-3.5) rectangle (5.5,-1);
 	\filldraw (3.5,-1.5) circle (2.5pt);
 	\filldraw (5,-1.5) circle (2.5pt);
 	\filldraw (3.5,-3) circle (2.5pt);
 	\filldraw (5,-3) circle (2.5pt);
 	\draw [line width=1.1pt] (3.5,-3)--(5,-3);
 	\draw [line width=1.1pt] (3.5,-3)--(3.5,-1.5);
 	\draw [line width=1.1pt] (3.5,-1.5)--(5,-1.5);
 		\draw [line width=1.1pt] (5,-1.5)--(5,-3);
 	\end{tikzpicture}}  & $2$-rigid & A &
 \raisebox{-0.5\height}{ 
 	\begin{tikzpicture}[scale=.8]
 	\clip (3,-3.5) rectangle (5.5,-1);
 	\filldraw (3.5,-1.5) circle (2.5pt);
 	\filldraw (5,-1.5) circle (2.5pt);
 	\filldraw (3.5,-3) circle (2.5pt);
 	\filldraw (5,-3) circle (2.5pt);
 	\draw [line width=1.1pt] (3.5,-3)--(5,-3);
 	\draw [line width=1.1pt] (3.5,-3)--(3.5,-1.5);
 	\draw [line width=1.1pt] (3.5,-1.5)--(5,-1.5);
 	\draw [line width=1.1pt] (5,-1.5)--(5,-3);
 		\draw [line width=1.1pt] (3.5,-3)--(5,-1.5);
 	\end{tikzpicture}}  & non $2$-rigid & C \\
 \hline
 \raisebox{-0.5\height}{ 
 	\begin{tikzpicture}[scale=.8]
 	\clip (3,-3.5) rectangle (5.5,-1);
 	\filldraw (3.5,-1.5) circle (2.5pt);
 	\filldraw (5,-1.5) circle (2.5pt);
 	\filldraw (3.5,-3) circle (2.5pt);
 	\filldraw (5,-3) circle (2.5pt);
 	\draw [line width=1.1pt] (3.5,-3)--(5,-3);
 	\draw [line width=1.1pt] (3.5,-3)--(3.5,-1.5);
 	\draw [line width=1.1pt] (3.5,-1.5)--(5,-1.5);
 	\draw [line width=1.1pt] (5,-1.5)--(5,-3);
 	\draw [line width=1.1pt] (3.5,-3)--(5,-1.5);
 	\draw [line width=1.1pt] (5,-3)--(3.5,-1.5);
 	\end{tikzpicture}}  & $2$-rigid & B & & & \\
 \hline
 \end{tabular}
\end{center}

\begin{remark}
It is worth mentioning that all degenerations that occur among all graph Lie algebras are presented in \cite{AAA} and also follow
from a result in \cite{GO} about the degeneration
of a Lie algebra to any of its quotients plus a suitable abelian factor.
\end{remark}

We come now to the general case.

The Lie algebra associated to the complete graph of $m$ vertices,
$K_m$, is the free 2-step nilpotent Lie algebra on $m$ generators.
We know that it is 2-rigid and not 3-rigid \cite{BT}.
Out main result is that besides this family, 
the only 2-rigid ones have been already listed in the above tables. 

\begin{theorem}
If $G$ is a non-complete graph with $|V|>4$, then $\g_G$ is not $2$-rigid.
\end{theorem}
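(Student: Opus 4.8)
The plan is to reduce the algebraic hypothesis of Theorem~D to a purely combinatorial condition on $G$, and then verify that condition case by case following the dichotomy of the preceding remark: graphs with an abelian factor are handled by Theorem~C, and those without by Theorem~D. The key translation is that when $G$ has no isolated vertices the center is $\z=\langle A\rangle$ with complement $\v=\langle V\rangle$, and $[v_i,\g]$ is the span of the edges incident to $v_i$. Thus for two non-adjacent vertices $v,w$ (so that $[v,w]=0$ automatically) the subspace $\langle [v,\g]\cup[w,\g]\rangle$ is exactly the span of the edges incident to $v$ or to $w$; since the edges form a basis of $\z$, this span is a \emph{proper} subspace of $\z$ precisely when some edge of $G$ is incident to neither $v$ nor $w$. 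So Theorem~D applies as soon as $G$ contains two non-adjacent vertices together with an edge disjoint from both, and my goal is to exhibit such a configuration whenever $G$ is non-complete with $|V|\ge 5$.

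First I would dispose of the case in which $G$ has at least one isolated vertex. Writing $\g_G=\n\oplus\a_l$ with $\a_l$ the maximal abelian factor, spanned by the $l\ge 1$ isolated vertices, and $\n$ the graph Lie algebra of $G$ with those vertices removed (so $\n$ has no abelian factor), Theorem~C tells us that $\g_G$ is $2$-rigid only if $\n=\h_1$ and $l=1$. That configuration forces $|V|=3$, so under $|V|\ge 5$ this case yields non-rigidity at once.

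The substance of the argument is the case in which $G$ has no isolated vertices. Since $G$ is non-complete I fix a non-adjacent pair $p,q$ and set $W=V\setminus\{p,q\}$, so $|W|=|V|-2\ge 3$. If the induced subgraph $G[W]$ contains an edge, that edge is disjoint from $\{p,q\}$ and the pair $p,q$ already meets the hypothesis of Theorem~D. Otherwise $W$ is independent; as $G$ has no isolated vertices, every vertex of $W$ is then adjacent to $p$ or to $q$. I would pick three distinct vertices $a,b,c\in W$, which is possible exactly because $|W|\ge 3$: the pair $a,b$ is non-adjacent, while an edge joining $c$ to $p$ or to $q$ is disjoint from $\{a,b\}$, so Theorem~D applies to $a,b$.

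The main obstacle, and the reason the threshold is $|V|>4$ rather than smaller, is precisely this last subcase. When $V\setminus\{p,q\}$ is independent the initial pair fails, and one must locate a \emph{second} non-adjacent pair admitting a disjoint edge; this requires a third vertex in $W$, i.e.\ $|W|\ge 3$. For $|V|=4$ one has $|W|=2$ and the construction collapses, which is consistent with the $2$-rigid non-complete four-vertex graphs appearing in the tables above. Hence the only delicate point is checking that $|V|\ge 5$ always forces the independent-set subcase to be resolvable; the remainder is a direct dictionary between Theorems~C and~D and the combinatorics of $G$.
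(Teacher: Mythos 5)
Your proof is correct and takes essentially the same route as the paper's: after eliminating isolated vertices via Theorem~C, your dichotomy (an edge inside $W=V\setminus\{p,q\}$, versus $W$ independent with a third vertex $c\in W$ joined to $p$ or $q$) is exactly the paper's argument, which phrases the first case contrapositively as $A_1\cup A_2=A$ and then applies Theorem~D to the non-adjacent pair $v_3,v_4$ with the edge $a_{15}$ as $z$. The only difference is organizational --- you argue directly where the paper argues by contradiction.
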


\begin{proof}
Let $G$ be a non-complete graph with $|V|>4$.
For each $v_i\in V$ let $A_i$ be the subset of $A$ formed by all edges $a_{jk}$ with $j=i$ or $k=i$.

We assume, for the absurd, that $\g_G$ is $2$-rigid. 
We notice the following facts:
\begin{enumerate}
\item Since $g_G$ is $2$-rigid, then $\g_G$ has no abelian factor (Theorem C). Then $\C^n=\v\oplus\z=\langle V\rangle\oplus\langle A\rangle$ and $\z=[\v,\v]$.
\item Since $G$ is a non-complete graph, there exist vertices $v_1$ and $v_2$, such that $a_{1,2}\notin A$. That is $[v_1,v_2]=0$.
\item From the Theorem D, items (1) and (2) above and the fact that $\g_G$ is $2$-rigid, it follows that $\langle [v_1,\g]\cup[v_2,\g]\rangle=\z$. Therefore $A_1\cup A_2=A$.
\end{enumerate}

Let $v_3$, $v_4$ and $v_5$ be three different vertices different from $v_1$ and $v_2$ (recall that $|V|>4$), we have that:
\begin{enumerate}
\setcounter{enumi}{3}
\item Since $A=A_1\cup A_2$, then $[v_3,v_4]=0$.
\item Since $\emptyset\neq A_5\subseteq A_1\cup A_2$ ($v_5$ is not an abelian factor and item (3)), we can assume that $a_{15}\in A$.
\end{enumerate}
 
 Now, the hypotheses of Theorem D are satisfied by taking $v=v_3$, $w=v_4$ and $z=a_{15}$ and therefore $\g_G$ is not $2$-rigid, wich is an absurd. Thus $\g_G$ is not $2$-rigid.
\end{proof}

We can now state the following classification result.

\begin{theorem}
$\g_G$ is $2$-rigid if and only if $G$ is a complete graph $K_m$ 
or is one of the following 5 graphs:

\begin{center}
 \begin{tikzpicture}
   \filldraw (-2,0.5) circle (2.5pt); 
   \filldraw (-1,0.5) circle (2.5pt); 
   
   \filldraw (1.5,1) circle (2.5pt);
   \filldraw (1,0) circle (2.5pt);
   \filldraw (2,0) circle (2.5pt); 
   \draw [line width=1.1pt] (1,0)--(2,0);
   
   \filldraw (4.5,1) circle (2.5pt);
   \filldraw (4,0) circle (2.5pt);
   \filldraw (5,0) circle (2.5pt); 
   \draw [line width=1.1pt] (4,0)--(5,0);
   \draw [line width=1.1pt] (4,0)--(4.5,1);

   \filldraw (7,1) circle (2.5pt);
   \filldraw (8,1) circle (2.5pt);
    \filldraw (7,0) circle (2.5pt);
   \filldraw (8,0) circle (2.5pt);
   \draw [line width=1.1pt] (7,1)--(8,1);
   \draw [line width=1.1pt] (7,0)--(8,0);
   
   \filldraw (10,1) circle (2.5pt);
   \filldraw (11,1) circle (2.5pt);
 \filldraw (10,0) circle (2.5pt);
\filldraw (11,0) circle (2.5pt);
   \draw [line width=1.1pt] (10,1)--(11,1);
 \draw [line width=1.1pt] (10,0)--(11,0);
\draw [line width=1.1pt] (10,1)--(10,0);
\draw [line width=1.1pt] (11,0)--(11,1);
 \end{tikzpicture}
\end{center}

\ 

\end{theorem}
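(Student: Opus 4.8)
The plan is to prove the two implications separately, assembling results already in hand rather than computing anything new.

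For sufficiency I would argue directly from the earlier theorems. If $G=K_m$ then $\g_G$ is the free $2$-step nilpotent Lie algebra on $m$ generators, which is $2$-rigid by Theorem B. Of the five exceptional graphs, the edge together with an isolated vertex gives $\h_1\oplus\a_1$, which is $2$-rigid by the ``if'' direction of Theorem C; the empty graph on two vertices is $\a_2$, trivially $2$-rigid since $\N_{2,2}$ is a single point; and the path $P_3$, the disjoint union of two edges $\h_1\oplus\h_1$, and the $4$-cycle $C_4$ are $2$-rigid by Theorem A, the vanishing $H^2_{2\text{-nil}}(\g_G,\g_G)=0$ being exactly what the corresponding table entries of the previous section record.

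For necessity, assume $\g_G$ is $2$-rigid and $G$ is not complete; I must locate $G$ among the five. The first step is to bound the number of vertices: the preceding theorem shows that a non-complete $G$ with $|V|>4$ yields a non-rigid $\g_G$, so necessarily $|V|\le4$. It then remains to inspect the finitely many graphs on at most four vertices, which I would organize by whether $G$ has an isolated vertex. If it does, then either $G$ also has an edge, in which case $\g_G$ has a nontrivial $2$-step factor and Theorem C forces $\g_G=\h_1\oplus\a_1$ (the edge-plus-isolated-vertex graph), or $G$ is edgeless, i.e. $G=\overline{K_m}$, which is $2$-rigid precisely for $m\le2$ (giving $\a_1=K_1$ and $\a_2$) and, for $m\ge3$, is a proper degeneration of a genuine $2$-step algebra and hence non-rigid. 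If $G$ has no isolated vertex I take $\v=\langle V\rangle$ and $\z=\langle A\rangle$ and apply Theorem D in contrapositive form: for $\g_G$ to be $2$-rigid, every pair of non-adjacent vertices must, between them, be incident to every edge. Testing this against the short list of graphs on $\le4$ vertices with no isolated vertex eliminates $P_4$, the star $K_{1,3}$, the paw, and the diamond $K_4-e$ (in each case I exhibit two non-adjacent vertices missing a common edge, e.g. $\{1,4\}$ and the edge $23$ in $P_4$), leaving precisely $P_3$, $\h_1\oplus\h_1$, $C_4$ together with the complete graphs $K_2,K_3,K_4$.

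The conceptually substantial work lies in the inputs rather than in this statement: the vertex bound is the content of the preceding theorem, and the positive side for $P_3$, $\h_1\oplus\h_1$ and $C_4$ rests on the cohomological vanishing behind Theorem A. Accordingly, the step I expect to demand the most care is the bookkeeping of the $|V|\le4$ analysis—making sure the enumeration of graphs on up to four vertices is exhaustive, producing for each non-listed graph an explicit non-adjacent pair that misses an edge so that Theorem D applies, and handling the degenerate abelian graphs $\a_1$ and $\a_2$, which fall outside the hypotheses of both Theorems C and D and must be checked by the observation that $\N_{n,2}$ is a single point for $n\le2$.
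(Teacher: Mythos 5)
Your proposal is correct and follows essentially the same route as the paper, whose own proof of this theorem is left implicit: the preceding theorem bounds any non-complete rigid graph to $|V|\le 4$, and the classification then reduces to the case-by-case tables for $m\le 4$ justified by Theorems A--D, which is exactly how you assemble it (including deferring the cohomological vanishing for $P_3$, $\h_1\oplus\h_1$ and $C_4$ to the table entries, just as the paper does). Where your citations deviate from the printed table annotations you are in fact the more accurate one: Theorem D (not C) is what actually applies to $P_4$, $K_{1,3}$, the paw and the diamond, since none of these algebras has an abelian factor, and your degeneration argument for the edgeless graphs with $m\ge 3$ is sound, whereas Theorem D as stated cannot apply to an abelian algebra (there $\v=0$, so no commuting pair $v,w\in\v$ exists).
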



\end{document}